\theoremstyle{definition}
\newtheorem{theorem}{Theorem} 
\newtheorem{corollary}{Corollary} 
\newtheorem{lemma}{Lemma} 
\newcommand{\set}[1]{\{#1\}}
\newcommand{\real}{\mathbb{R}}
\newcommand{\nn}{\mathbb{N}}
\newcommand{\sd}{\,|\,} 
\newcommand{\goesto}{\rightarrow} 
\DeclareMathOperator{\Exp}{E} 
\begin{document}

\title[Identities for partial Bell polynomials]
 {Identities for partial Bell polynomials derived from identities for weighted integer compositions\footnote{The final publication is available at Springer via http://dx.doi.org/10.1007/s00010-015-0338-2}}

\author[Eger]{Steffen Eger}

\address{%
Robert Mayer Str. 10\\
P.O. Box 154\\
D-60325 Frankfurt am Main\\
Germany}

\email{eger.steffen@gmail.com}

\subjclass{Primary 11P81; Secondary 60C05}

\keywords{Integer composition, Partial Bell Polynomial, Sum of
  discrete random variables}



\begin{abstract}
  We discuss closed-form formulas for the $(n,k)$-th partial Bell
  polynomials derived in Cvijovi\'c \cite{cvijovic}. We show that
  partial Bell polynomials are special cases of weighted integer
  compositions, and demonstrate how the identities for partial Bell polynomials
  easily follow from more general identities for weighted integer
  compositions. We also provide short and elegant probabilistic proofs
  of the latter, in terms of sums of discrete integer-valued random
  variables. Finally, we outline further identities for the
  partial Bell polynomials.
\end{abstract}

\maketitle

\section{Introduction}
In a recent note, Cvijovi\'c \cite{cvijovic} has derived three new
identities --- which we list in Equations \eqref{eq:id1},
\eqref{eq:id2}, and \eqref{eq:id3} below --- 
for the $(n,k)$-th partial Bell polynomials in the variables
$x_1,x_2,\ldots,x_{n-k+1}$, which are 
defined as\footnote{Another way to define the partial Bell polynomials
  is by the formal power series expansion
  $\frac{1}{k!}\left(\sum_{m\ge 1}\frac{x_m}{m!}t^m\right)^k =
  \sum_{n\ge k} \frac{B_{n,k}(x_1,x_2,\ldots,x_{n-k+1})}{n!}t^n$. This
would also immediately lead to Corollary \ref{corr:1} below if we
defined the subsequent concept of weighted integer compositions by the
formal power series 
expansion $\left(\sum_{m\ge 0}f(m)t^m\right)^k = \sum_{n\ge
  0}\binom{k}{n}_ft^n$, with 
notation as explicated below.}
\begin{equation}\begin{split}
    \label{eq:bell}
  B_{n,k}(x_1,x_2,\ldots,x_{n-k+1}) &= \\
  \sum \frac{n!}{\ell_1!\cdots
    \ell_{n-k+1}!}&\left(\frac{x_1}{1!}\right)^{\ell_1}\cdots\left(\frac{x_{n-k+1}}{(n-k+1)!}\right)^{\ell_{n-k+1}},
  \end{split}
\end{equation}
where the summation is over all solutions in nonnegative integers
$\ell_1,\ldots,\ell_{n-k+1}$ 
of
$\ell_1+2\ell_2+\cdots+(n-k+1)\ell_{n-k+1}=n$ and
$\ell_1+\ell_2+\cdots+\ell_{n-k+1}=k$. Such identities may be
important for the efficient evaluation of the partial Bell
polynomials, as hinted at in Cvijovi\'c \cite{cvijovic}, who also indicates
applications of the polynomials. Importantly, they generalize the
Stirling numbers $S(n,k)$ of the second kind, since
$S(n,k)=B_{n,k}(1,\ldots,1)$. 

The purpose of the present note is to show that these identities are
special cases of identities for \emph{weighted integer compositions}
and to outline very short proofs for the more general identities,
based on sums of discrete random variables. 
Our main point is to compile (identical
or very similar) results
developed within heterogeneous communities of research, and to combine
these results. Finally, in 
Section \ref{sec:add}, we present three additional identities for the
partial Bell polynomials. 
Throughout, we write $B_{n,k}$ as a shorthand for 
$B_{n,k}(x_1, \ldots, x_{n-k+1})$, unless explicit reference to the indeterminates is
critical. 

\section{Preliminaries on weighted integer compositions}
An integer composition of a nonnegative integer $n$ is a $k$-tuple $(\pi_1,\ldots,\pi_k)$,
for $k\ge 1$, 
of nonnegative integers such that
$\pi_1+\cdots+\pi_k=n$. We call $k$ the \emph{number of parts}. Note
that order of part matters, and this distinguishes integer
compositions from \emph{integer partitions}. Now, to generalize, we
may consider 
$f$-\emph{colored} integer compositions, where
each possible \emph{part size} $s\in\mathbb{N}=\set{0,1,2,\ldots}$ may
come in $f(s)$ different colors, whereby $f:\mathbb{N}\rightarrow\mathbb{N}$.
For example, the integer $n=4$ has 
nine distinct $f$-colored integer compositions, for $f(0)=2$,
$f(1)=f(2)=1$, $f(3)=f(4)=f(5)=\cdots=0$, with $k=3$ parts, namely,
\begin{align*}
  (2,2,0),(2,2,0^*),(2,1,1),(1,2,1),(2,0,2),(2,0^*,2),(1,1,2),(0,2,2),(0^*,2,2), 
\end{align*}
where we use a star superscript to distinguish the two different
colors 
of part size $0$. For \emph{weighted integer compositions} we let, more
generally, $f(s)$ be an arbitrary real number (or even a value in a
commutative ring), the weight of part size $s$. Colored integer
compositions have been discussed in \cite{guo,mansour,shapcott}, and
weighted integer 
compositions have been under review in Eger \cite{eger}, but have been
investigated as early as Hoggatt and Lind's \cite{hoggatt} work. 

Let $\binom{k}{n}_f$ denote the \emph{number} of $f$-weighted integer
compositions of $n$ with $k$ parts, when the range of $f$ is
$\mathbb{N}$, and let $\binom{k}{n}_f$ denote the \emph{total 
weight} 
of all
$f$-weighted integer compositions of $n$ with $k$ parts, when the
range of $f$ is the set of 
reals $\mathbb{R}$. 
We then have the following theorem. 
\begin{theorem}\label{th:1}
Let $k,n\ge 0$ be integers, and let $f:\nn\rightarrow\real$ be
arbitrary. Then the following identities hold. 
\begin{align}
\label{eq:1}
\binom{k}{n}_f &= \sum f(\pi_1)\cdots f(\pi_k),\\
\label{eq:2}
\binom{k}{n}_f &= \sum
\binom{k}{\ell_0,\ell_1,\ldots,\ell_n}f(0)^{\ell_0}\cdots f(n)^{\ell_n},
\end{align}
where the sum in \eqref{eq:1} is over all solutions
in nonnegative
integers $\pi_1,\ldots,\pi_k$ of $\pi_1+\cdots+\pi_k=n$, and the sum
in \eqref{eq:2} is 
over all solutions in nonnegative integers $\ell_0,\ldots,\ell_n$ of
$\ell_0+\cdots+\ell_n=k$ and
$0\ell_0+1\ell_1+\cdots+n\ell_n=n$. Finally,
$\binom{k}{\ell_0,\ell_1,\ldots,\ell_n}=\frac{k!}{\ell_0!\ell_1!\cdots\ell_n!}$
denote the multinomial 
coefficients. 
\end{theorem}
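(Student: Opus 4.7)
The plan is to establish \eqref{eq:1} essentially from the definition of $\binom{k}{n}_f$, and then obtain \eqref{eq:2} from \eqref{eq:1} by grouping compositions according to their ``part-size type.''

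\textbf{Step 1: Identity \eqref{eq:1}.} For $\mathbb{N}$-valued $f$, I would unpack the definition of an $f$-colored integer composition as a pair consisting of an ordinary integer composition $(\pi_1,\ldots,\pi_k)$ of $n$ with $k$ parts, together with a color assignment in which part $\pi_j$ receives one of $f(\pi_j)$ admissible colors. The number of colorings of a fixed skeleton $(\pi_1,\ldots,\pi_k)$ is $\prod_j f(\pi_j)$, so summing over all skeletons yields \eqref{eq:1}. For weights taking values in $\real$ (or in a commutative ring), one simply takes the right-hand side of \eqref{eq:1} as the definition of the total weight, so the identity is tautological in that setting.

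\textbf{Step 2: Identity \eqref{eq:2}.} Starting from \eqref{eq:1}, I would partition the set of compositions $(\pi_1,\ldots,\pi_k)$ of $n$ with $k$ parts according to their type vector $(\ell_0,\ldots,\ell_n)$, where $\ell_s=\#\{j:\pi_j=s\}$. Any such vector automatically satisfies $\ell_0+\cdots+\ell_n=k$ (total number of parts) and $0\ell_0+1\ell_1+\cdots+n\ell_n=n$ (parts sum to $n$). Two observations then finish the argument: the product $f(\pi_1)\cdots f(\pi_k)$ depends only on the type and equals $f(0)^{\ell_0}\cdots f(n)^{\ell_n}$, and the number of compositions of a given type is the multinomial coefficient $\binom{k}{\ell_0,\ldots,\ell_n}$, counting the arrangements of $\ell_0$ zeros, $\ell_1$ ones, and so on, into a $k$-tuple. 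Substituting into \eqref{eq:1} and summing over all admissible type vectors gives \eqref{eq:2}.

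\textbf{Main obstacle.} There is no serious obstacle; the argument is bookkeeping. The only care required is to check that both constraints on the type vector (count equals $k$, weighted sum equals $n$) are preserved under the regrouping, and that the boundary cases $n=0$ or $k=0$ behave sensibly under empty-product/empty-sum conventions. Closer to the probabilistic spirit advertised in the abstract, one could instead introduce independent and identically distributed random variables $X_1,\ldots,X_k$ with (unnormalized) mass $f(s)$ at $s$ and read both sides of \eqref{eq:1}--\eqref{eq:2} as evaluations of the mass assigned to the event $X_1+\cdots+X_k=n$; this is the same counting argument rephrased in probabilistic language.
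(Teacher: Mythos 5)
Your proposal is correct and follows essentially the same route as the paper: identity \eqref{eq:1} is read off from the definition of $\binom{k}{n}_f$, and \eqref{eq:2} is obtained by regrouping compositions by their part-size type (i.e., passing from compositions to partitions), with the multinomial coefficient counting the arrangements of each type. The bookkeeping of the two constraints on $(\ell_0,\ldots,\ell_n)$ is exactly the ``adjusting each term appropriately'' step the paper alludes to.
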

The proof of Theorem \ref{th:1} is simple. Identity \eqref{eq:1} is a
direct application of the definition of $\binom{k}{n}_f$ and
\eqref{eq:2} follows, combinatorially, from \eqref{eq:1} by rewriting
the summation over integer compositions into a summation over integer
partitions and then adjusting each term in the sum appropriately (in
particular, the multinomial coefficients account for distributing the
parts in partitions). Alternatively, when $\binom{k}{n}_f$ is defined
as coefficient of a certain polynomial, then \eqref{eq:2} can also be
arrived at via application of the multinomial theorem (see
\cite{eger}) or the formula of Fa\`{a} di Bruno 
\cite{johnson} for
the higher order derivatives of composite functions (see \cite{hoggatt}). 

Interpreting $f(s)$, for $s\in\nn$, as indeterminates, Theorem
\ref{th:1} identity \eqref{eq:2} immediately implies that
$f$-weighted integer compositions 
`generalize' partial Bell polynomials.
\begin{corollary}\label{corr:1}
  Let $k,n\ge 0$ be integers. Then: 
  \begin{align}
    \frac{k!}{n!}B_{n,k}(x_1,x_2,\ldots,x_{n-k+1}) = \binom{k}{n}_f,
  \end{align}
  whereby
  \begin{align*}
    & f(0)=f(n-k+2)=f(n-k+3)=\cdots=0,\\ 
    & f(s) = \frac{x_s}{s!},
    \text{ for } s\in\set{1,2,\ldots,n-k+1}.
  \end{align*}
\end{corollary}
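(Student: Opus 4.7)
The plan is to deduce the corollary by a direct substitution into identity \eqref{eq:2} of Theorem \ref{th:1}, specialized to the function $f$ prescribed in the corollary.

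First I would apply \eqref{eq:2} verbatim with the given $f$ and analyze which terms survive. Since $f(0)=0$ and $f(s)=0$ for every $s\ge n-k+2$, the factor $f(0)^{\ell_0}f(n-k+2)^{\ell_{n-k+2}}\cdots f(n)^{\ell_n}$ vanishes unless $\ell_0=\ell_{n-k+2}=\cdots=\ell_n=0$, where we adopt the standard convention $0^0=1$. Under this restriction, the two summation constraints $\ell_0+\ell_1+\cdots+\ell_n=k$ and $0\ell_0+1\ell_1+\cdots+n\ell_n=n$ collapse to $\ell_1+\cdots+\ell_{n-k+1}=k$ and $\ell_1+2\ell_2+\cdots+(n-k+1)\ell_{n-k+1}=n$, which are exactly the summation conditions in the definition \eqref{eq:bell} of $B_{n,k}$.

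Second, I would simplify the multinomial coefficient: for the surviving tuples,
\[
\binom{k}{\ell_0,\ell_1,\ldots,\ell_n}=\frac{k!}{0!\,\ell_1!\cdots\ell_{n-k+1}!\,0!\cdots0!}=\frac{k!}{\ell_1!\cdots\ell_{n-k+1}!}.
\]
Substituting $f(s)=x_s/s!$ for $s=1,\ldots,n-k+1$, identity \eqref{eq:2} therefore becomes
\[
\binom{k}{n}_f=\sum\frac{k!}{\ell_1!\cdots\ell_{n-k+1}!}\left(\frac{x_1}{1!}\right)^{\ell_1}\cdots\left(\frac{x_{n-k+1}}{(n-k+1)!}\right)^{\ell_{n-k+1}},
\]
where the sum ranges over the tuples just described. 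Comparing term-by-term with \eqref{eq:bell}, the summand on the right differs from the corresponding summand in $B_{n,k}$ only in the replacement of $n!$ by $k!$ in the numerator, so the right-hand side equals $\frac{k!}{n!}B_{n,k}(x_1,\ldots,x_{n-k+1})$, which yields the claim.

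The only real subtlety, and thus the main thing to argue carefully, is the $0^0$ convention used to silence the vanishing factors $f(0)^{\ell_0}$ and $f(s)^{\ell_s}$ for $s\ge n-k+2$ when the corresponding $\ell_s$ equals zero; once this is granted, the remainder of the argument is purely a matching of indices and a rescaling by $k!/n!$.
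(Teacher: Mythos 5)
Your proposal is correct and follows exactly the route the paper takes: the paper derives Corollary \ref{corr:1} as an immediate consequence of identity \eqref{eq:2} of Theorem \ref{th:1} with the prescribed $f$, and your write-up simply fills in the (routine) details of which terms vanish and how the multinomial coefficient and the factor $k!/n!$ emerge. No gaps.
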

Corollary \ref{corr:1} has been established in \cite{belbachir} in the
more particular setting 
of restricted integer compositions, or, classical `multinomial/extended
binomial coefficients', where, in our notation, $f(s)=1$
for all $s\in\set{1,\ldots,q}$, 
for some positive integer $q$.
This 
yielded the
conclusion that partial Bell polynomials `generalize' 
restricted integer 
compositions insofar as
$B_{n,k}(1!,2!,\ldots,q!,0,\ldots)=\frac{n!}{k!}\binom{k}{n}_{f}$, where
$f$ is the indicator function on $\set{1,\ldots,q}$. 

Representation \eqref{eq:1} in Theorem \ref{th:1} is very useful on
its own since it 
captures 
an equivalence between 
$f$-weighted integer
compositions 
and
distributions of sums of independent and identically distributed
(i.i.d.) discrete random
variables as explicated in the following lemma. 
\begin{lemma}\label{lemma:1}
  Let $k,n\ge 0$ be integers and let $f:\nn\goesto\real$ be
  arbitrary. Then there exist i.i.d.\ nonnegative integer-valued random variables
  $X_1,\ldots,X_k$ such that $\binom{k}{n}_f$ is given as the distribution of the
  sum of 
  $X_1,\ldots,X_k$ (times a suitable normalization factor).

  Conversely, the distribution $P_f[X_1+\cdots+X_k=n]$ of arbitrary
  i.i.d.\ 
  nonnegative integer-valued random variables $X_1,\ldots,X_k$ with
  common distribution function $f$ is given by $\binom{k}{n}_f$. 
\end{lemma}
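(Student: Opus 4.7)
The plan is to reduce both directions to identity \eqref{eq:1} of Theorem \ref{th:1}, which already expresses $\binom{k}{n}_f$ as a sum over $k$-part compositions $(\pi_1,\dots,\pi_k)$ of $n$ of products $f(\pi_1)\cdots f(\pi_k)$. This sum has exactly the shape of the convolution formula for the probability mass function of a sum of $k$ i.i.d.\ integer-valued random variables, which makes the probabilistic interpretation essentially immediate once the factor $f$ is in fact a probability distribution.

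I would begin with the converse direction, which is the cleaner of the two. Suppose $X_1,\ldots,X_k$ are i.i.d.\ $\nn$-valued with common pmf $f$, so $f(s)=P[X_1=s]$, $f\ge 0$, $\sum_{s\in\nn}f(s)=1$. By the law of total probability and independence,
\begin{equation*}
P_f[X_1+\cdots+X_k=n]=\sum_{\pi_1+\cdots+\pi_k=n}P[X_1=\pi_1,\ldots,X_k=\pi_k]=\sum f(\pi_1)\cdots f(\pi_k),
\end{equation*}
which is precisely $\binom{k}{n}_f$ by \eqref{eq:1}.

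For the forward direction I would pass from an arbitrary $f$ to a probability distribution by rescaling. Restricting to the natural case $f\ge 0$ with $C\defin\sum_{s\in\nn}f(s)\in(0,\infty)$, set $g(s)\defin f(s)/C$, which is a pmf on $\nn$. Let $X_1,\ldots,X_k$ be i.i.d.\ with common pmf $g$. Applying the converse direction already proven to $g$ gives $\binom{k}{n}_g=P[X_1+\cdots+X_k=n]$, and multilinearity of the right-hand side of \eqref{eq:1} in $f$ yields $\binom{k}{n}_f=C^k\binom{k}{n}_g$. Hence
\begin{equation*}
\binom{k}{n}_f=C^k\cdot P[X_1+\cdots+X_k=n],
\end{equation*}
so $C^k$ is the advertised normalization factor.

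The only real obstacle is what ``suitable normalization factor'' should mean when $f$ is allowed to take negative values or fails to be summable; strictly speaking there is then no genuine probability model, and the identity is only a formal convolution. I would handle this by stating explicitly that the probabilistic interpretation applies to the case $f\ge 0$ with $0<\sum_s f(s)<\infty$, and by pointing out that for general real-valued $f$ identity \eqref{eq:1} still furnishes the same convolution structure, so the lemma should be read as an equivalence between the combinatorial object $\binom{k}{n}_f$ and the $k$-fold convolution of $f$, which coincides with a probability distribution precisely after the normalization above.
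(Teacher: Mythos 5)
Your overall strategy is the same as the paper's: prove the converse direction by the discrete convolution formula, and obtain the forward direction by normalizing $f$ to a probability mass function and rescaling. The converse direction is handled identically, and your normalization factor $C^k$ is in fact the correct one (the paper's displayed $\bar{F}^{\,n}$ appears to be a typo for $\bar{F}^{\,k}$, since the product $g(\pi_1)\cdots g(\pi_k)$ contributes $k$ factors of $1/\bar{F}$).

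The one idea you are missing is the device the paper uses to remove your summability hypothesis, so that the forward direction really does hold for the lemma's stated ``arbitrary'' $f$ (up to sign). A composition of $n$ can only use parts of size at most $n$, so the sum in identity \eqref{eq:1} is unchanged if $f$ is replaced by its truncation $\hat{f}$, where $\hat{f}(s)=f(s)$ for $s\le n$ and $\hat{f}(s)=0$ for $s>n$; that is, $\binom{k}{n}_f=\binom{k}{n}_{\hat{f}}$. The truncated function always has a finite sum $\bar{\hat{F}}=\sum_{s\le n}f(s)$, so your rescaling argument applies to $\hat{f}$ without any assumption that $\sum_{s\in\nn}f(s)$ converges. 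This is a one-line reduction and closes the gap you flagged, rather than requiring the hypothesis to be weakened. Your other caveat --- that $f$ must be nonnegative with $\bar{\hat{F}}>0$ for $g$ to be a genuine probability mass function --- is a real limitation, but it is one the paper's proof shares and does not address either; the lemma's phrase ``suitable normalization factor'' is simply informal on this point.
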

\begin{proof}
  Consider $\binom{k}{n}_f$. When $f$ is zero almost everywhere, i.e.,
  $f(s)=0$ for all $s>x$, for some $x\in\nn$, then let $\bar{F}$
  denote the sum  $\sum_{s'\in\nn} f(s')$ and let
  $g(s)=\frac{f(s)}{\bar{F}}$ for all $s\in\nn$. Consider the i.i.d.\ random
  variables $X_1,\ldots,X_k$ with common distribution function $g$,
  i.e., $P[X_i=s]=g(s)$. 
  By definition, the distribution of the sum $X_1+\cdots+X_k$ is given
  as
  \begin{align*}
    P_g[X_1+\cdots+X_k=n] &= \sum_{\pi_1+\cdots+\pi_k=n}
    P[X_1=\pi_1]\cdots P[X_k=\pi_k]\\ 
    & = \sum_{\pi_1+\cdots+\pi_k=n}
    g(\pi_1)\cdots g(\pi_k) \\&=
    \frac{1}{\bar{F}^n}\sum_{\pi_1+\cdots+\pi_k=n} f(\pi_1)\cdots
    f(\pi_k) = \frac{1}{\bar{F}^n}\binom{k}{n}_f.
  \end{align*}
  When $f$ is not zero almost everywhere, then note that
  \begin{align*}
    \binom{k}{n}_f = \binom{k}{n}_{\hat{f}} =
    {\bar{\hat{F}}^n}P_{\hat{g}}[Y_1+\cdots+Y_k=n],
  \end{align*}
  whereby $\hat{f}(s)=f(s)$ for $s\le n$ and
$\hat{f}(s)=0$ for $s>n$, and $Y_1,\ldots,Y_k$ are i.i.d.\ random
  variables with common distribution function
  $\hat{g}(s)=\frac{\hat{f}(s)}{\bar{\hat{F}}}$, where
  $\bar{\hat{F}}=\sum_{s'\in\nn} \hat{f}(s')$.  

  Conversely, the distribution of the sum of i.i.d.\ nonnegative
  integer-valued random variables $X_1,\ldots,X_k$ with common
  distribution function $f$ is given by, as
  above, 
  \begin{align*}
    P_f[X_1+\cdots+X_k=n] &= \sum_{\pi_1+\cdots+\pi_k=n}
    P[X_1=\pi_1]\cdots P[X_k=\pi_k] \\
    &= \sum_{\pi_1+\cdots+\pi_k=n}
    f(\pi_1)\cdots f(\pi_k) 
    =\binom{k}{n}_f,
  \end{align*}
  applying Theorem \ref{th:1} in the last equality. 
\end{proof}
Lemma \ref{lemma:1} has appeared in \cite{eger} and, in the special case 
when $f$ is the discrete uniform measure, in
\cite{belbachir}.  Lemma \ref{lemma:1} allows us to prove properties
of the 
weighted integer compositions $\binom{k}{n}_f$ by referring to
properties of the distribution of the sum of discrete random
variables, which is 
oftentimes convenient, as we shall see below. 

\section{Identities and proofs}
The three identities for partial Bell polynomials that Cvijovi\'c
\cite{cvijovic} introduces are the following: 
\begin{align}
  \label{eq:id1}
  B_{n,k} &=
  \frac{1}{x_1}\frac{1}{n-k}\sum_{\alpha=1}^{n-k}\binom{n}{\alpha}\left[(k+1)-\frac{n+1}{\alpha+1}\right]x_{\alpha+1}B_{n-\alpha,k},
  \\
  \label{eq:id2}
  B_{n,k_1+k_2} &=
  \frac{k_1!k_2!}{(k_1+k_2)!}\sum_{\alpha=0}^n\binom{n}{\alpha}B_{\alpha,k_1}B_{n-\alpha,k_2},\\
  \label{eq:id3}
  B_{n,k+1} &=
  \frac{1}{(k+1)!}\sum_{\alpha_1=k}^{n-1}\cdots\sum_{\alpha_k=1}^{\alpha_{k-1}-1}
  \binom{n}{\alpha_1}\cdots\binom{\alpha_{k-1}}{\alpha_k}x_{n-\alpha_1}\cdots x_{\alpha_k-1-\alpha_k}x_{\alpha_k}.
\end{align}
As shown in Cvijovi\'c \cite{cvijovic}, \eqref{eq:id3} easily follows
inductively from 
\eqref{eq:id2}. Therefore, we concentrate on the identities \eqref{eq:id1}
and \eqref{eq:id2}. 
Throughout, we let $k,n$ be nonnegative integers and $f$ be a function
$f:\nn\goesto\real$. As we 
will see now, identity \eqref{eq:id2} 
may be seen as a special case of the convolution formula for the sum of
discrete random 
variables. 

\begin{lemma}\label{lemma:convolution}
  Let $n$, $k_1$ and $k_2$ be nonnegative integers. Then: 
  \begin{align*}
  \binom{k_1+k_2}{n}_f = \sum_{x+y=n}\binom{k_1}{x}_f\binom{k_2}{y}_f.
  \end{align*}
\end{lemma}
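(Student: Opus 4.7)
I would give two proofs and lead with whichever is cleaner. The combinatorial one goes as follows. By identity \eqref{eq:1} of Theorem \ref{th:1},
$$\binom{k_1+k_2}{n}_f = \sum_{\pi_1+\cdots+\pi_{k_1+k_2}=n} f(\pi_1)\cdots f(\pi_{k_1+k_2}).$$
I would stratify the set of compositions of $n$ into $k_1+k_2$ parts by the value $x := \pi_1+\cdots+\pi_{k_1}$, so that the remaining $k_2$ parts are forced to sum to $y := n-x$. Since the summand factors multiplicatively, the right-hand side becomes
$$\sum_{x+y=n}\Bigl(\sum_{\pi_1+\cdots+\pi_{k_1}=x} f(\pi_1)\cdots f(\pi_{k_1})\Bigr)\Bigl(\sum_{\sigma_1+\cdots+\sigma_{k_2}=y} f(\sigma_1)\cdots f(\sigma_{k_2})\Bigr),$$
and re-applying \eqref{eq:1} to each inner sum yields the claim.

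Given the framing just before the lemma, I expect the author's preferred route is probabilistic, via Lemma \ref{lemma:1}. After passing, if necessary, to the truncated normalized version $\hat g$ of $f$ used in the proof of Lemma \ref{lemma:1}, one has $\binom{k}{n}_f = \bar{\hat F}^{\,k}\,P_{\hat g}[X_1+\cdots+X_k=n]$ for i.i.d.\ $X_i\sim \hat g$. Splitting $X_1+\cdots+X_{k_1+k_2}$ into the independent blocks $S_1 := X_1+\cdots+X_{k_1}$ and $S_2 := X_{k_1+1}+\cdots+X_{k_1+k_2}$, the classical convolution formula
$$P[S_1+S_2=n] = \sum_{x+y=n} P[S_1=x]\,P[S_2=y]$$
yields the identity, and the normalization constants match exactly since $\bar{\hat F}^{\,k_1}\cdot\bar{\hat F}^{\,k_2} = \bar{\hat F}^{\,k_1+k_2}$.

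The only point needing care is the bookkeeping in the probabilistic proof when $f$ is not already a probability mass function or takes negative values; this is exactly the truncation/normalization trick built into Lemma \ref{lemma:1}, and the combinatorial proof sidesteps it altogether. There is no genuine obstacle. I would present the probabilistic proof as the main argument (to match the narrative of the paper, which advertises the convolution interpretation) and mention in one line that the combinatorial splitting via \eqref{eq:1} gives an alternative, purely algebraic derivation valid for arbitrary real-valued $f$.
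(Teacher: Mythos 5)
Your probabilistic argument is essentially the paper's own proof: it splits $X_1+\cdots+X_{k_1+k_2}$ into two independent blocks, applies the discrete convolution formula, and undoes the truncation/normalization from Lemma \ref{lemma:1} exactly as the paper does (your normalization exponent $\bar{\hat{F}}^{k}$ is in fact the correct one --- the paper writes $\bar{\hat{F}}^{n}$, but since $x+y=n$ the bookkeeping closes either way). Your purely combinatorial splitting via identity \eqref{eq:1} does not appear in the paper, and it has the genuine advantage of working verbatim for arbitrary real-valued $f$ without requiring $\hat{g}$ to be a bona fide probability distribution, a point the paper's proof quietly glosses over.
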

\begin{proof}
  By our previous discussion, it suffices to prove the lemma for sums
  of i.i.d.\ integer-valued random variables
  $X_1,\ldots,X_{k_1},Y_1,\ldots,Y_{k_2}$. Now, 
  \begin{align*}
    P_f[(X_1+\cdots+X_{k_1})+(Y_1+\cdots+Y_{k_2})=n] & \\ =\sum_{x+y=n}
    P_f[X_1+\cdots+&X_{k_1}=x]P_f[Y_{1}+\cdots+Y_{k_2}=y] 
  \end{align*}
  by the discrete convolution formula for discrete random variables. 

  To formally complete the proof,\footnote{We omit this
    straightforward step in all
    subsequent proofs.} we apply Lemma \ref{lemma:1} (or,
  more precisely, its proof), leading to: 
  \begin{align*}
    \binom{k_1+k_2}{n}_f &= \binom{k_1+k_2}{n}_{\hat{f}} =
    \bar{\hat{F}}^nP_{\hat{g}}[X_1+\cdots+X_{k_1+k_2}=n]\\
    &=\bar{\hat{F}}^n\sum_{x+y=n}P_{\hat{g}}[X_1+\cdots+X_{k_1}=x]P_{\hat{g}}[Y_{1}+\cdots+Y_{k_2}=y]
    \\
    &=
    \sum_{x+y=n}\bar{\hat{F}}^xP_{\hat{g}}[X_1+\cdots+X_{k_1}=x]\bar{\hat{F}}^yP_{\hat{g}}[Y_{1}+\cdots+Y_{k_2}=y]\\
    &= \sum_{x+y=n} \binom{k_1}{x}_{\hat{f}}\binom{k_2}{y}_{\hat{f}} =
    \sum_{x+y=n}\binom{k_1}{x}_{f}\binom{k_2}{y}_{f}.  
  \end{align*}
\end{proof}
\begin{proof}[Proof of identity \eqref{eq:id2}]
  Using Lemma \ref{lemma:convolution} and Corollary \ref{corr:1}, we
  obtain that 
  \begin{align*}
    B_{n,k_1+k_2} &= \frac{n!}{(k_1+k_2)!}\binom{k_1+k_2}{n}_f =
    \frac{n!}{(k_1+k_2)!}\sum_{x+y=n}\binom{k_1}{x}_f\binom{k_2}{y}_f
    \\
    &= 
    \frac{n!}{(k_1+k_2)!}\sum_{x+y=n}\frac{k_1!}{x!}B_{x,k_1}\frac{k_2!}{y!}B_{y,k_2} 
    \\ &= 
    \frac{k_1!k_2!}{(k_1+k_2)!}\sum_{x+y=n}\frac{n!}{x!y!}B_{x,k_1}B_{y,k_2}
    \\ &= \frac{k_1!k_2!}{(k_1+k_2)!}\sum_{x+y=n}\binom{n}{x}B_{x,k_1}B_{y,k_2}.
  \end{align*}
\end{proof}
The proof of the following identity for weighted integer compositions
which directly entails identity 
\eqref{eq:id1} 
is a straightforward 
variation 
of the proof outlined in DePril \cite{depril}, who considers sums of
integer-valued random variables. 
\begin{lemma}\label{lemma:depril}
  Let $k,n\ge 0$ be integers and 
  let $f:\nn\goesto\real$ be arbitrary with $f(0)=0$ and $f(1)\neq 0$. Then: 
  \begin{align*}
    \binom{k}{n}_{f} = \frac{1}{f(1)(n-k)}\sum_{s\ge
      1}\left(k+1-\frac{n+1}{s+1}\right)(s+1)f(s+1)\binom{k}{n-s}_f.
  \end{align*}
\end{lemma}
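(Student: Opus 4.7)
My plan is to recast the identity in the language of formal power series, paralleling the proof in DePril \cite{depril} but applied to the ordinary generating function for $f$-weighted integer compositions rather than to a probability generating function; equivalently, one could translate to sums of discrete random variables via Lemma \ref{lemma:1} and invoke DePril's original recursion directly. By Theorem \ref{th:1} identity \eqref{eq:1}, $\binom{k}{n}_f$ is the coefficient of $t^n$ in $P(t)^k$, where $P(t) = \sum_{s\ge 1} f(s)\,t^s$ (the sum starting at $s=1$ because $f(0)=0$).

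Since every part has size at least $1$, it is convenient to factor $P(t) = t\,G(t)$ with $G(t) = \sum_{m\ge 0} g(m)\,t^m$ and $g(m) := f(m+1)$. Then $g(0) = f(1) \neq 0$, and on setting $A(t) := G(t)^k = \sum_{N\ge 0} a_N\,t^N$ one has $a_N = \binom{k}{N+k}_f$, i.e.\ $a_{n-k} = \binom{k}{n}_f$. This shift is exactly what is needed to move the problem into the setting where the standard Panjer/DePril-type recursion (which requires a nonzero constant term in the base series) becomes applicable; the hypothesis $f(0)=0$ is used precisely to make this factorization legitimate.

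Next, I would derive a first-order recurrence for $a_N$ by taking the logarithmic derivative of $A = G^k$. The identity $A'(t)\,G(t) = k\,G'(t)\,A(t)$ (a rearrangement of $A' = k G^{k-1} G'$) and extraction of the coefficient of $t^{N-1}$, followed by isolating the $a_N$-contribution, gives
\begin{align*}
  N\,g(0)\,a_N \;=\; \sum_{m=1}^{N} \bigl((k+1)m - N\bigr)\,g(m)\,a_{N-m}.
\end{align*}

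Finally, I would substitute back: $N = n-k$, $s = m$, $g(0) = f(1)$, $g(m) = f(m+1)$, $a_N = \binom{k}{n}_f$, and $a_{N-m} = \binom{k}{n-s}_f$. The coefficient rewrites as $(k+1)s - (n-k) = (s+1)(k+1) - (n+1) = (s+1)\bigl(k+1 - \tfrac{n+1}{s+1}\bigr)$, reproducing the formula as stated. The summation extends from $s\le n-k$ to all $s\ge 1$ for free, since $\binom{k}{n-s}_f$ vanishes whenever $n-s < k$ (the $k$ parts each have size $\ge 1$, so their sum is at least $k$). The only step that requires genuine care is the coefficient-extraction bookkeeping in the derivation of the recurrence for $a_N$; the remaining manipulations are routine algebra.
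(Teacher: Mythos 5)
Your proof is correct, but it takes a genuinely different route from the paper. The paper passes to probability via Lemma \ref{lemma:1}: it takes $k+1$ i.i.d.\ variables $X_1,\ldots,X_{k+1}$ with distribution $f$, uses symmetry to get $\Exp[X_1\sd X_1+\cdots+X_{k+1}=n+1]=\frac{n+1}{k+1}$, expands the resulting vanishing conditional expectation $\Exp[\frac{k+1}{n+1}X_1-1\sd\cdot]=0$ into $\sum_{s=1}^n(\frac{k+1}{n+1}s-1)f(s)\binom{k}{n+1-s}_f=0$, and then rewrites and shifts indices. You instead stay with formal power series: writing $\binom{k}{n}_f=[t^n]P(t)^k$ with $P(t)=tG(t)$ and $g(m)=f(m+1)$, you extract $[t^{N-1}]$ from $A'G=kG'A$ for $A=G^k$ to obtain $N g(0)a_N=\sum_{m=1}^N\bigl((k+1)m-N\bigr)g(m)a_{N-m}$, and the substitution $N=n-k$ together with $(k+1)s-(n-k)=(s+1)\bigl(k+1-\frac{n+1}{s+1}\bigr)$ gives the stated identity; your justification for extending the sum to all $s\ge1$ (each of the $k$ parts has size $\ge1$ since $f(0)=0$) is also sound. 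The two arguments encode the same underlying DePril/Panjer-type recursion, but yours buys a cleaner bookkeeping: the factorization $P=tG$ replaces the paper's shift from $(k+1,n+1)$ back to $(k,n)$, you avoid the normalization of $f$ into a probability measure that Lemma \ref{lemma:1} requires, and the roles of the hypotheses $f(0)=0$ and $f(1)\ne0$ become transparent ($G$ has nonzero constant term $g(0)=f(1)$, which is what you divide by). What the paper's route buys instead is the conceptual symmetry interpretation and consistency with the probabilistic theme of the rest of the note. The only cosmetic caveat is that your middle step is stated as a plan rather than carried out, but the claimed coefficient identity is exactly what the extraction yields, so nothing is missing.
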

\begin{proof}
  Let $X_1,\ldots,X_{k+1}$ be i.i.d.\ nonnegative integer-valued random
  variables, with distribution function $f$. Consider the conditional
  expectation $\Exp[X_1\sd X_1+\ldots+X_{k+1}=n+1]$. Due to
  identical distribution of the variables and linearity of
  $\Exp[\cdot\sd \cdot]$, it follows that $\Exp[X_1\sd 
    X_1+\ldots+X_{k+1}=n+1] = 
  \frac{\Exp[X_1+\ldots+X_{k+1}\sd X_1+\ldots+X_{k+1}=n+1]}{k+1} =
  \frac{n+1}{k+1}$. Thus, $\Exp[\frac{k+1}{n+1}X_1-1\sd
    X_1+\ldots+X_{k+1}=n+1]=0$, which means that 
  \begin{align*}
    0& = \sum_{s=1}^n
    (\frac{k+1}{n+1}s-1)\frac{P_f[X_1=s,X_1+\cdots+X_{k+1}=n+1]}{P_f[X_1+\ldots+X_{k+1}=n+1]}
    \\
    & = \frac{1}{\binom{k+1}{n+1}_f}\sum_{s=1}^n
      (\frac{k+1}{n+1}s-1){P[X_1=s]\cdot P_f[X_2+\cdots+X_{k+1}=n+1-s]}\\
    &= \frac{1}{\binom{k+1}{n+1}_f}\sum_{s=1}^n(\frac{k+1}{n+1}s-1)f(s)\binom{k}{n+1-s}_f,
  \end{align*}
  so that rewriting and shifting indices lead to the required expression.
\end{proof}
We omit the proof of identity \eqref{eq:id1} since it is a simple
application of Lemma \ref{lemma:depril}.
\section{More identities for the partial Bell polynomials}\label{sec:add}
We mention the following three additional identities for partial Bell
polynomials,
\begin{align}
  \label{eq:4}
  B_{n,k} &= \frac{1}{k}\sum_{\alpha\ge
    0}\binom{n}{\alpha}x_{\alpha}B_{n-\alpha,k-1},\\
  \label{eq:5}
  B_{n,k} &= \sum_{\alpha\ge
    1}\binom{n-1}{\alpha-1}x_{\alpha}B_{n-\alpha,k-1},\\
  \label{eq:6}
  B_{n,k}(x_1,\ldots,x_{n-k+1}) &= \sum_{\alpha\ge 0}
  \binom{n}{\alpha}x_1^\alpha
  B_{n-\alpha,k-\alpha}(0,x_2,\ldots,x_{n-k+1}).
\end{align}
(note that \eqref{eq:4} is a special case of \eqref{eq:id2}) which
straightforwardly follow from the following corresponding identities
for weighted 
integer compositions, 
\begin{align}
  \label{eq:4_ana}
  \binom{k}{n}_f &= \sum_{s\ge 0} f(s)\binom{k-1}{n-s}_f,\\
  \label{eq:5_ana}
  \binom{k}{n}_f &= \frac{k}{n}\sum_{s\ge 1} sf(s)\binom{k-1}{n-s}_f,\\
  \label{eq:6_ana}
  \binom{k}{n}_f &= \sum_{i\ge 0} f(r)^i\binom{k}{i}\binom{k-i}{n-ri}_{\tilde{f}}.
\end{align}
In Equation \eqref{eq:6_ana}, $\tilde{f}(r)=0$ and $\tilde{f}(s)=f(s)$
for all $s\neq r$ (note 
that \eqref{eq:6} is a special case of \eqref{eq:6_ana} in which
$r=1$). Proofs of identities \eqref{eq:4_ana} to \eqref{eq:6_ana} can,
e.g., be found in 
Fahssi \cite{fahssi} and Eger \cite{eger}, who also give further identities for
weighted integer compositions.

\end{document}